\theoremstyle{theorem}
\newtheorem{theorem}{Theorem}
\newtheorem*{maintheorem}{Theorem}
\newtheorem{proposition}{Proposition}
\newtheorem{lemma}{Lemma}
\theoremstyle{definition}
\newtheorem{remark}{Remark}
\newcommand{\sys}{{\rm sys}}
\newcommand{\R}{{\mathbb R}}
\newcommand{\Hyp}{{\mathbb H}}
\newcommand{\N}{{\mathbb N}}
\newcommand{\Z}{{\mathbb Z}}
\newcommand{\PSL}{{\rm PSL}}
\newcommand{\arcsinh}{{\,\rm arcsinh}}
\newcommand{\arccosh}{{\,\rm arccosh}}
\newcommand{\length}{{\rm length}}
\numberwithin{equation}{section}
\newcommand{\sfrac}{\frac}
\begin{document}

%




%

\title[Systole growth on surfaces with cusps]{Systole growth for finite area hyperbolic surfaces}




\author[F.~Balacheff]{Florent Balacheff}

\address[Florent Balacheff] {Laboratoire Paul Painlev\'e, Universit\'e Lille 1\\ Lille, France}
\email{florent.balacheff@math.univ-lille1.fr}
\author[E. Makover]{Eran Makover}
\address[Eran Makover]{Central Connecticut State University\\ Department of Mathematics \\New Britain, CT}
\email{makovere@ccsu.edu}

\author[H.~Parlier]{Hugo Parlier${}^\dagger$}

\address[Hugo Parlier]{Department of Mathematics, University of Fribourg\\
  Switzerland}
\email{hugo.parlier@gmail.com}
\thanks{${}^\dagger$Research supported by Swiss National Science Foundation grant number PP00P2\textunderscore 128557}

\date{\today}




%

\begin{abstract} 
We are interested in the maximum value achieved by the systole function over all complete finite area hyperbolic surfaces of a given signature $(g,n)$. This maximum is shown to be strictly increasing in terms of the number of cusps for small values of $n$. We also show that this function is greater than a function that grows logarithmically in function of the ratio $g/n$.
\end{abstract}




%

\subjclass[2010]{Primary: 30F10. Secondary: 32G15, 53C22.}

\keywords{Hyperbolic surfaces of finite area, systole}

\maketitle

\section{Introduction}

Consider two natural integers $g$ and $n$ such that $2g-2+n$ is positive. For an orientable complete finite area hyperbolic surface $M$ of signature $(g,n)$, where $g$ is the genus and $n$ the number of cusps, the {\it systole} is defined as length of the (or a) shortest closed geodesic and denoted by $\sys(M)$. Because there is an upper bound on the systole which only depends on the topology of the surface, the quantity
$$
\sys(g,n):=\sup \{\,\sys(M) \mid M \, \text{ is a finite area hyperbolic surface of signature} \, (g,n)\,\}
$$
is finite and we are interested in its asymptotic behavior in terms of $g$ and $n$. Observe that this supremum is in fact a maximum following Mahler's compactness theorem, see \cite{Mu71}. Furthermore $\sys(g,n)$ is universally bounded from below by $2 \arcsinh(1)$, an easy consequence of, on the one hand, the fact that the set of systoles of a maximal surface fill the surface, and on the other hand, the collar lemma.

The first interesting upper bound on $\sys(g,n)$ to appear in the literature is due to Schmutz Schaller \cite[Theorem 14]{Sc94} (another proof appeared in \cite{Ad98}). The bound is as follows: for $n\geq 2$ and $(g,n)\neq (0,3)$,
\begin{equation}\label{eq:schmutz}
\sys(g,n) \leq 4 \arccosh \left({6g-6+3n\over n}\right).
\end{equation}
For a number of cusps bigger than the genus, the function $\sys(g,n)$ is thus roughly constant. More precisely, for any function $n(g)$ of the genus with integer values such that $\sup_{g\to \infty} \sfrac{g}{n(g)}=\alpha \in [0,\infty[$, we have
$$
2 \arcsinh(1)\leq \sys(g,n(g)) \leq 4 \arccosh(6\alpha+3).
$$ 
In the other direction, if $n(g)$ is such that $\lim_{g\to \infty} \sfrac{g}{n(g)}=+\infty$, the right term in inequality (\ref{eq:schmutz})  is asymptotically equivalent to $4\ln (\sfrac{g}{n(g)})$. 

In \cite{Sc94}, Schmutz Schaller also proved that surfaces corresponding to principal congruence subgroups of $\PSL_2(\Z)$ have maximal systole in their respective moduli spaces. More precisely, for any integer $k\geq 2$, let $\Gamma_k$ denote the kernel of the map $\PSL_2(\Z)\to \PSL_2(\Z/k\Z)$. If $(g_k,n_k)$ denotes the signature of the surface quotient $M_k=\Hyp^2/\Gamma_k$, then
$$
\sys(M_k)=\sys(g_k,n_k).
$$
For $k=p$ prime, 
$$
n_p=\frac{p^2-1}{2}
$$ 
and 
$$
g_p=1+\frac{(p^2-1)(p-6)}{24}
$$
so that  $72\cdot g_p^2 \simeq n_p^3$ (see \cite{BFK}). Furthermore, a standard calculation (compare with \cite{BS94}) leads to the following lower bound:
\begin{equation}\label{eq:arithm}
\sys(M_p)\gtrsim 4 \ln \left({g_p\over n_p}\right) \simeq {4\over 3}\ln g_p.
\end{equation}
So inequality (\ref{eq:schmutz}) is asymptotically sharp for signatures $(g_p,n_p)$. Here and in the sequel, we use the following definition: two functions $\lambda, \mu : \N \to \R$ satisfy the relation $\lambda(g) \gtrsim \mu(g)$ if for any positive $\varepsilon$ we have $(1+\varepsilon) \cdot \lambda(g) \geq \mu(g)$ for large enough $g$. \\

In the compact case, Buser and Sarnak proved in \cite{BS94} that there exists a universal constant $U>0$ such that for all genus $g\geq 2$
$$
U\ln g \leq \sys(g,0) \leq 2 \ln (4g-2),
$$
and that for some special infinite sequences of genera $g_k$, the lower bound can be strengthened to
$$
\sys(g_k,0)\gtrsim {4\over 3} \ln g_k
$$ 
by more delicate arithmetic considerations.\\

We add the following results to this panorama.

\begin{maintheorem}\label{thm:main} The function $\sys(g,n)$ enjoys the following properties.\\

\noindent 1. For all $(g,n)$ such that $2g-2 + n >0$ and $n\leq 2$,
$$
\sys(g,n)< \sys(g,n+1).
$$
\noindent 2. For any $g\geq 2$ and $n \in \N$, 
$$
\sys(g,n)\geq U \ln \left({g \over n+1}\right)
$$  
where $U$ denotes  Buser's and Sarnak's universal constant.\\

\end{maintheorem}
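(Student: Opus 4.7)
Both parts are driven by the Schwarz-Pick lemma for the Poincar\'e metric: for hyperbolic Riemann surfaces $N'\subsetneq N$ the inclusion $N'\hookrightarrow N$ is not a covering, so strict Schwarz-Pick yields $\rho_N\vert_{N'}<\rho_{N'}$ pointwise; equivalently, every rectifiable curve $\gamma\subset N'$ satisfies $\length_N(\gamma)<\length_{N'}(\gamma)$.

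\textit{Part 1.} By Mahler's theorem, pick a hyperbolic surface $M$ of signature $(g,n)$ realizing $\sys(g,n)$, choose $p\in M$ carefully, and let $M':=M\setminus\{p\}$ carry its unique complete hyperbolic metric of signature $(g,n+1)$. Given a systolic geodesic $\gamma^*$ of $M'$, Schwarz-Pick supplies $\length_M(\gamma^*)<\length_{M'}(\gamma^*)=\sys(M')$, which combined with $\sys(M)\le\length_M(\gamma^*)$ yields
\[
\sys(g,n)=\sys(M)<\sys(M')\le\sys(g,n+1).
\]
The inequality $\sys(M)\le\length_M(\gamma^*)$ requires $\gamma^*$ to be essential and non-peripheral in $M$. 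Essentiality is automatic: a null-homotopic $\gamma^*$ in $M$ would bound a disk which restricted to $M'$ can only enclose the new puncture $p$, making $\gamma^*$ peripheral in $M'$, a contradiction. For $n=0$ non-peripherality is vacuous. For $n=1,2$ the obstacle is that $\gamma^*$ could simultaneously wind around $p$ and a pre-existing cusp of $M$ and thus degenerate to a peripheral curve in $M$; the plan to exclude this is to place $p$ in the thick part of $M$, far from all existing cusps, so that any closed loop of $M'$ composed of a small loop around $p$ and one around a pre-existing cusp is forced to traverse a large portion of $M$ and is therefore too long to be systolic. This is the main technical point, and likely why the hypothesis $n\le 2$ appears: controlling these relative positions becomes increasingly delicate as $n$ grows.

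\textit{Part 2.} Starting from a closed surface $M_0$ of genus $g$ with $\sys(M_0)\ge U\ln g$ (Buser-Sarnak), choose points $p_1,\dots,p_n\in M_0$ with pairwise distances at least $d_0:=\tfrac{1}{2}U\ln(g/(n+1))$; such a configuration exists by a greedy packing argument comparing $\text{area}(M_0)=4\pi(g-1)$ with the volume $\asymp e^R$ of a hyperbolic ball of radius $R$. Set $M:=M_0\setminus\{p_1,\dots,p_n\}$ with its unique complete hyperbolic metric of signature $(g,n)$.

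For a systolic geodesic $\gamma^*$ of $M$, Schwarz-Pick gives $\length_{M_0}(\gamma^*)\le\sys(M)$. If $\gamma^*$ is non-trivial in $\pi_1(M_0)$, it is freely homotopic in $M_0$ to a closed geodesic, whence $\sys(M)\ge\length_{M_0}(\gamma^*)\ge\sys(M_0)\ge U\ln g$, stronger than needed. Otherwise $\gamma^*$ bounds a topological disk in $M_0$; non-peripherality of $\gamma^*$ in $M$ forces this disk to contain at least two of the punctures, say $p_i$ and $p_j$ with $d_{M_0}(p_i,p_j)\ge d_0$. The main hyperbolic-geometry estimate is then $\length_{M_0}(\gamma^*)\ge 2d_0$: lift the disk homeomorphically to $\widetilde D\subset\Hyp^2$, note that the unique lifts $\tilde p_i,\tilde p_j$ inside $\widetilde D$ still satisfy $d_{\Hyp^2}(\tilde p_i,\tilde p_j)\ge d_0$, and apply orthogonal projection onto the geodesic through $\tilde p_i,\tilde p_j$, which is covered by $\partial\widetilde D$ with multiplicity at least $2$. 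Combining the two cases yields $\sys(g,n)\ge\sys(M)\ge U\ln(g/(n+1))$, as desired.
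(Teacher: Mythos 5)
Your Part 1 has the right skeleton but stops exactly at the step that constitutes the actual content of the paper's proof. You correctly isolate the problematic case (a systolic geodesic of $M'$ that encircles the new puncture $p$ together with one pre-existing cusp, hence is peripheral in $M$ and gives no lower bound $\sys(M)\le \length_M(\gamma^*)$), but your resolution --- ``place $p$ in the thick part, far from all existing cusps'' --- is left as a plan rather than an argument, and it is not quantified: what one needs is a point $p$ at which \emph{every} homotopically nontrivial geodesic loop based at $p$ has length at least $\sys(M)$, equivalently a point admitting an embedded open disk of radius $\sys(M)/2$. Proving that such a point exists when $n\le 2$ is precisely the paper's second lemma, and it is not a soft compactness statement: one takes a maximal embedded disk, observes that each self-bumping point of its boundary yields a geodesic loop of length $2r<\sys(M)$ which must therefore encircle a cusp, that distinct bumping loops encircle distinct cusps (bigon criterion), hence there are at most two of them, and then one derives a contradiction with maximality by moving the center so as to increase its distance to both horocyclic neighborhoods simultaneously (using that equal loop lengths force equal distances $d_1=d_2$ to the two cusp regions). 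None of this appears in your write-up, and ``far from the cusps'' is not even well-posed without specifying horoball truncations; so as it stands Part 1 is incomplete at its main technical point. (The surrounding frame --- Mahler compactness to realize the maximum, strict Schwarz--Pick via Wolpert's lemma, the essentiality argument --- is fine and matches the paper, which additionally phrases the comparison as $\sys(M\setminus\{p\})=\sys(M)$ for a punctured-surface notion of systole.)

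Part 2 is essentially correct and follows the same route as the paper: start from a Buser--Sarnak surface $M_0$ of genus $g$, produce $n$ points that are pairwise far apart by an area/packing count, puncture there, and bound below the length of any systolic geodesic of the punctured surface according to whether it is essential in $M_0$ or bounds a disk containing at least two punctures. Your one genuine variation is the length estimate in the second case: you lift the bounding disk to $\Hyp^2$ and use the $1$-Lipschitz orthogonal projection onto the geodesic through the two lifted punctures, getting $\length\ge 2\,d_{M_0}(p_i,p_j)$; the paper instead takes the minimal representative in the free homotopy class rel the punctures, a piecewise geodesic with at least two corners at distinct $p_i$'s, and gets the same bound $4r$ with $2r$ the separation. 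Both are valid, and yours is arguably a touch more self-contained. Two small things you should make explicit: the packing count needs $U\le 2$ (harmless, since one may always shrink $U$, and it is forced by the known upper bound $2\ln(4g-2)$) so that a maximal $d_0$-separated set with $d_0=\tfrac12 U\ln(g/(n+1))$ has at least $n$ elements; and the case $g<n+1$, where the claimed bound is negative and hence vacuous, should be set aside at the start.
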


Property 1. says that for fixed genus the function $\sys(g,n)$ is strictly increasing in $n$ for very small values of $n$. One may wonder how long this remains true. The existence of closed hyperbolic surfaces in any genus $g\geq 2$ with large systole \cite{BS94} and inequality (\ref{eq:schmutz}) imply that this growth property is doomed to fail for large enough $n$. It is worth noting that it is unknown whether $\sys(g,n)$ is increasing in $g$.

Property 2. together with inequality (\ref{eq:schmutz}) implies that for {\it any} function $n:\N \to \N$ such that $\lim_{g\to \infty} {\sfrac{g}{n(g)}}=+\infty$, the function $\sys(g,n(g))$ grows {\it roughly} like $\ln \left({\sfrac{g}{n(g)}}\right)$ (where by roughly we mean up to positive multiplicative constants).

\section{Adding a small number of cusps}

Our goal in this section is to prove Property 1. of our main theorem. This follows directly from the following  two lemmas.

\begin{lemma} Let $M$ be a hyperbolic surface of signature $(g,n)$ and suppose that $M$ admits an embedded open disk of radius $\sfrac{\sys(M)}{2}$ centered at some point  $p$. Then the unique hyperbolic surface $M'$ of signature $(g,n+1)$   and conformally equivalent to $M\setminus \{p\}$ satisfies
$$
\sys(M')>\sys(M).
$$
\end{lemma}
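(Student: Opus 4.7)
The plan is to compare the hyperbolic metrics on $M$ and $M'$ pointwise via the strict Schwarz--Pick (Ahlfors) lemma, and to combine this comparison with an argument about the free homotopy class of a systolic geodesic of $M'$.

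Let $h_M$ and $h_{M'}$ denote the hyperbolic metrics on $M$ and $M'$. Since $M'$ is conformally equivalent to $M\setminus\{p\}$, we may regard $h_{M'}$ as the unique complete hyperbolic metric on this open subset of $M$. The inclusion $\iota:M\setminus\{p\}\hookrightarrow M$ is a holomorphic map between hyperbolic Riemann surfaces which is not a holomorphic covering (it misses the point $p$). By the strict form of the Schwarz--Pick lemma, we obtain the pointwise strict inequality $\iota^{*}h_M<h_{M'}$ on $M\setminus\{p\}$; in particular the $h_M$-length of any piecewise smooth curve $\alpha$ in $M\setminus\{p\}$ is strictly smaller than its $h_{M'}$-length.

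Next, I would pick a systolic closed geodesic $\gamma'$ of $M'$, so that $\ell_{h_{M'}}(\gamma')=\sys(M')$, and argue that $\gamma'$ represents a nontrivial free homotopy class in $M$. Otherwise $\gamma'$ would bound a disk in $M$; this disk would have to contain $p$ (else $\gamma'$ would already be contractible in $M\setminus\{p\}$, contradicting that it is a closed geodesic of $M'$), and therefore $\gamma'$ would be freely homotopic on $M'$ to a loop encircling the new cusp, and such a class contains no closed geodesic. Let $\gamma$ denote the $h_M$-geodesic freely homotopic to $\gamma'$ in $M$. Combining the metric comparison with the length-minimizing property of $\gamma$ in its class yields
$$
\sys(M)\le \ell_{h_M}(\gamma)\le \ell_{h_M}(\gamma')<\ell_{h_{M'}}(\gamma')=\sys(M'),
$$
as desired.

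The delicate step is the strict inequality in the Schwarz--Pick comparison, which relies on the fact that the open inclusion is not a covering of hyperbolic Riemann surfaces; the rest of the argument is essentially bookkeeping about the homotopy class of a systolic curve. I note that the embedded-disk hypothesis does not seem to be needed in this analytic approach. I expect the authors include it either because they favor a more concrete geometric proof local to the point $p$ (for instance by comparing the two metrics explicitly inside the disk, where $h_M$ has an isometric hyperbolic disk model, and propagating control to the rest of the surface), or because the hypothesis is needed to dovetail with the companion lemma of the section, which presumably establishes that an extremal surface in signature $(g,n)$ with $n\le 2$ admits such a disk.
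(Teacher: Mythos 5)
There is a genuine gap, and it sits exactly where you wave the hypothesis away. When $n\ge 1$ the surface $M$ already has cusps, and a systolic geodesic $\gamma'$ of $M'$ may bound in $M'$ a twice-punctured disk containing the new cusp at $p$ together with one of the original cusps of $M$. Such a class is essential in $M'$ and carries a closed geodesic there, but in $M$ the curve is peripheral (it bounds a once-punctured disk around the old cusp), so its free homotopy class in $M$ contains no closed geodesic and the infimum of $h_M$-lengths in that class is $0$. Your step ``let $\gamma$ denote the $h_M$-geodesic freely homotopic to $\gamma'$ in $M$'' therefore fails, and with it the inequality $\sys(M)\le \ell_{h_M}(\gamma)$; ruling out only the null-homotopic case is not enough. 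Correspondingly, your side remark that the embedded-disk hypothesis is not needed is false: if $p$ is placed deep inside a cusp neighbourhood of $M$, the geodesic of $M'$ separating the two nearby cusps from the rest of the surface is short, and $\sys(M')$ drops below $\sys(M)$.

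The paper's proof is organized around precisely this case. It defines the systole of the punctured Riemannian surface $M\setminus\{p\}$ as the shortest closed curve bounding neither a disk nor a once-punctured disk, observes that the only new essential classes are the ones described above, and notes that the infimum of $h_M$-length in such a class over curves avoiding $p$ is realized by a geodesic loop based at $p$; the hypothesis that the open disk of radius $\sys(M)/2$ centered at $p$ is embedded forces every geodesic loop based at $p$ to have length at least $\sys(M)$. This yields $\sys(M\setminus\{p\})=\sys(M)$, and the strict contraction of the natural map $M'\to M$ (Wolpert's Schwarz lemma), which you also invoke, finishes the argument. Your proof is complete only when $n=0$, where the disk hypothesis is automatic and every essential class in $M$ is represented by a closed geodesic; to repair it in general you must handle the peripheral classes, and that is where the embedded disk is used.
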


\begin{proof}
In order to link the systole on both surfaces, we first define an appropriate notion of systole for Riemannian surfaces of genus $g$ with $n$ puncture points. For such a surface, the systole is defined as the infimum over the lengths of all closed curves which does not bound a disk or a once punctured disk. For hyperbolic closed surfaces of genus $g$ or hyperbolic surfaces of signature $(g,n)$, this definition coincides with the definition of systole as the length of a shortest closed geodesic. 

We now control the systole of $M\setminus \{p\}$ in terms of the systole of $M$. On the one hand, a closed curve of $M\setminus \{p\}$ which does not bound a disk or a punctured disk in $M$ has length at least $\sys(M)$ by definition. On the other hand, let $\gamma$ be a closed curve of $M\setminus\{p\}$ which bounds a once punctured disk in $M$ but not in $M\setminus\{p\}$. If we consider all closed curves of $M\setminus\{p\}$ homotopic to $\gamma$, the infimum of their lengths will be realized by a closed geodesic loop of $M$ based at $p$. As the open disk centered at $p$ and of radius $\sfrac{\sys(M)}{2}$ is embedded, we deduce that the length of this geodesic loop, and thus of $\gamma$,  is at least $\sys(M)$. In conclusion,  
$$
\sys(M\setminus \{p\})=\sys(M).
$$
The result then follows from a type of Schwarz lemma \cite{Wo82} which asserts that  the natural map $M' \to M$ is contracting.
\end{proof}

\begin{lemma}
On a hyperbolic surface $M$ of genus $g$ with at most two cusps, there always exists an embedded open disk of radius  $\sfrac{\sys(M)}{2}$. 
\end{lemma}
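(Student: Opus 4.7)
Put $L = \sys(M)$. I seek a point $p \in M$ with $\mathrm{inj}(p) \geq L/2$, which will be the center of the desired embedded open disk. Working in the universal cover $\pi\colon\Hyp^2\to M$, we have $\mathrm{inj}(p) = (1/2)\min_\sigma d(\tilde p,\sigma\tilde p)$, where $\sigma$ ranges over non-trivial deck transformations. Hyperbolic $\sigma$ contribute at least their translation length, hence at least $L$, so only parabolic $\sigma$ can produce a displacement below $L$. A direct computation in upper half-plane coordinates with cusp at $\infty$ and $\sigma(z)=z+1$ shows $d(\tilde p,\sigma\tilde p)<L$ amounts to $\tilde p$ lying in the horoball bounded by the horocycle of length $2\sinh(L/2)$. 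Writing $B_i\subset M$ for the image of the bad horoballs at all lifts of the $i$-th cusp, the lemma becomes the assertion $B_1\cup\cdots\cup B_n\neq M$.

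A systolic closed geodesic $\gamma$ (necessarily simple) cannot be contained in any single cusp neighborhood $B_i$: any closed curve confined to such a neighborhood is freely homotopic to a multiple of the corresponding cusp loop, whose only geodesic representative is a horocycle, not a closed geodesic. For $n\leq 1$ this immediately yields a point of $\gamma$ outside $B_1$ with $\mathrm{inj}\geq L/2$, settling the lemma.

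The $n=2$ case splits into two sub-cases. When $B_1$ and $B_2$ are disjoint in $M$, the connectedness of $\gamma$ forces it through $M\setminus(B_1\cup B_2)$, giving the required point. The remaining sub-case, $B_1\cap B_2\neq\emptyset$, is the main obstacle. An overlap of $B_1$ and $B_2$ in $M$ corresponds, in the universal cover, to an intersection between a lift of the bad horoball at $c_1$ and a lift of the bad horoball at $c_2$. Choosing $\tilde p$ in such an intersection yields two parabolic elements $\sigma_1,\sigma_2$ with distinct fixed points both displacing $\tilde p$ by less than $L$. My plan is to derive a contradiction by combining Jorgensen's inequality applied to the commutator $[\sigma_1,\sigma_2]$ (which constrains the interaction parameter between the two cusps) with the explicit displacement formula for the hyperbolic element $\sigma_1\sigma_2$ expressed in the normal forms of the two parabolics, so as to produce a closed geodesic of length strictly less than $L$.

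The principal obstacle is the quantitative step of this overlap sub-case: the naive triangle inequality $d(\tilde p,\sigma_1\sigma_2\tilde p) \leq d(\tilde p,\sigma_1\tilde p)+d(\tilde p,\sigma_2\tilde p)<2L$ is consistent with $L(\sigma_1\sigma_2)\geq L$ and therefore too weak. The argument must exploit the precise depth of $\tilde p$ in both horoballs simultaneously, together with the discreteness constraints on Fuchsian groups with two cusps, to sharpen the estimate strictly below $L$.
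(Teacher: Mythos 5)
Your reduction of the statement to ``the regions $B_1\cup\dots\cup B_n$ do not cover $M$'' is correct, and the $n=0$ case is immediate; but the proof as proposed has two genuine gaps, one of which you name yourself and which I do not believe can be closed along the route you sketch. First, the smaller gap: your claim that a systolic geodesic cannot be contained in a single $B_i$ uses that every closed curve in $B_i$ is freely homotopic to a power of the cusp loop, i.e.\ that $B_i$ is an embedded annular cusp neighborhood. The bad horoball projects to the region bounded by the horocycle of length $2\sinh(L/2)$, and this exceeds the universal embeddedness threshold as soon as $L>2\arcsinh(1)$; since the lemma must apply to surfaces with systole well above that value, $B_i$ may self-overlap and the homotopy claim is unjustified as stated. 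Second, and more seriously, the sub-case $B_1\cap B_2\neq\emptyset$ is not merely unfinished: the statement you plan to contradict is not false in general. The Margulis/Jorgensen-type constraint only forces one of the two parabolic displacements at a point of the overlap to be at least $2\arcsinh(1)$; it does not force the overlap to be empty once $L$ exceeds that constant. On a surface with two cusps joined by a short perpendicular arc and moderately large systole, the two regions genuinely meet, yet the lemma still holds because their union fails to cover $M$. So no contradiction can be extracted from overlap alone, and a systolic geodesic may well lie entirely inside $B_1\cup B_2$ (entering one region, crossing the overlap, and returning through the other), which also defeats your device of locating the good point on the systole.

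What is missing is a mechanism that finds a point outside $B_1\cup B_2$ even when the two regions interlock. The paper's proof supplies exactly this, by a variational argument that is quite different in spirit: take a maximal embedded open disk, of radius $r$, and suppose $r<\sys(M)/2$. Maximality forces a self-bumping point on its boundary, and the associated geodesic loop based at the center has length $2r<\sys(M)$, hence must be cusp-parallel; with at most two cusps there are at most two such bumping loops, and a single one can be escaped by moving the center. In the two-loop case the key observation is a symmetry: both loops have the \emph{same} length $2r$, hence the center is at the \emph{same} distance from the two standard horocyclic neighborhoods, and the two distance-realizing paths leave a direction in which the center can be pushed away from both cusps at once, enlarging the disk and contradicting maximality. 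That equidistance argument is precisely the ingredient that resolves the configuration on which your overlap sub-case is stuck; I would recommend replacing the ``derive a contradiction from overlap'' plan with a perturbation argument of this kind.
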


\begin{proof}
If there is no cusp, this is a straightforward consequence of the definition of systole. 

So suppose that there is at least one cusp on the surface and proceed by contradiction. Consider a maximal embedded disk $D$ and denote by $p$ its center.
The existence of such a disk is guaranteed by compactness (the thick part of the surface is compact) and the fact that there is an upper bound on the radius of an embedded disk. By assumption, its radius $r$ is strictly less than $\frac{\sys(M)}{2}$. Due to the maximality of $D$, its boundary admits at least one point of self-intersection which we call a {\it self-bumping point} of the disk. Consider the geodesic loop formed by the two radii of the disk joining $p$ to some self-bumping point. As its length is $2r< \sys(M)$, this {\it bumping loop} must be parallel to a cusp. If there is only one such bumping loop, it is easy to see that we get a contradiction: by moving our point $p$ in the appropriate direction, one can increase the radius of our maximal embedded open disk. So there are at least two bumping loops. Now observe that because the disk is embedded, two such loops only intersect in $p$ and thus both loops are non homotopic (see for instance the bigon criterion \cite{FM11}). So both loops surround distinct cusps. In particular for the case of a surface with one cusp we have reached a contradiction. We now suppose for the sequel that $M$ has two cusps. 

Observe that the existence of a third self-bumping point is impossible, as there are no more cusps to surround. We will get a contradiction in the event that there are only two self-bumping loops on the maximal embedded disk by proving that we can move its center in an appropriate direction in order to increase its radius. Recall that two horocyclic neighborhoods of area $1$ around the cusps are disjoint. Now consider the two geodesic loops $\gamma_1,\gamma_2$ based at $p$ and surrounding the two cusps. Consider, for each of the neighborhoods, the unique shortest geodesic between the neighborhood and the point $p$ entirely contained in the cylinder bounded by the cusp and the corresponding geodesic loop (see figure \ref{fig:cusp}).

\begin{figure}[h]
\leavevmode \SetLabels
\endSetLabels
\begin{center}
\AffixLabels{\centerline{\epsfig{file =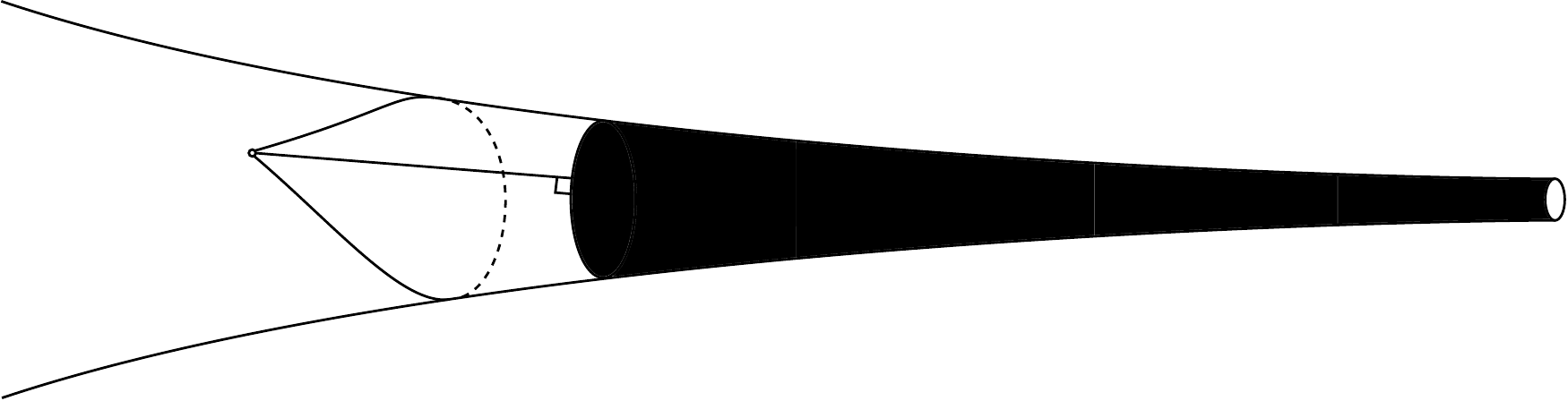,width=12.0cm,angle=0}}}
\vspace{-18pt}
\end{center}
\caption{A geodesic loop surrounding a cusp} \label{fig:cusp}
\end{figure}

For each  loop $\gamma_k$, denote this distance $d_k$ and its associated path $c_k$. Via standard hyperbolic trigonometry considerations, the geometry of this hyperbolic cylinder between one of the geodesic loops and the associated cusp  is completely determined by either $\ell(\gamma_k)$ or $d_k$. More precisely, there exists a strictly increasing function that associates to the length $\ell(\gamma_k)$ the distance $d_k$. In particular, as $\ell(\gamma_1)=\ell(\gamma_2)$, we deduce that $d_1=d_2$. Now consider the two distance realizing paths $c_1$ and $c_2$ emanating from $p$. They locally divide the surface around $p$ into two parts, and as such one can move the point $p$ in the appropriate part in order to increase its distance to each of the horocyclic neighborhoods. This leads to a contradiction.
\end{proof}

\section{Adding a large number of cusps}

The purpose of this section is to prove Property 2. of our main theorem.

\subsection{Comparison results and consequences}
Fix a signature $(g,n)$ such that $g\geq 2$. Let $M$ be a hyperbolic closed surface of genus $g$ with $n$ points $\{p_1,\ldots,p_n\}$ and $M'$ be a hyperbolic punctured surface with $n$ cusps. Suppose that $M'$ and $M\setminus \{p_1,\ldots,p_n\}$ are conformally equivalent. We already know by \cite{Wo82} that the natural map $M' \to M$ is contracting, and thus $\sys(M')>\sys(M)$, but we would like to have an opposite type inequality.

We first recall a comparison theorem of Brooks found in \cite{Br99}.  We denote by $D_i:=D(p_i,r)$ the disks in $M$ of radius $r$ centered at the $p_i$'s. For small enough $r$, these disks are pairwise disjoint and embedded. We also denote by  $\{c_1,\ldots,c_n\}$ the $n$ cusps of $M'$. Each cusp $c_i$ admits a neighborhood isometric to a neighborhood of infinity in the punctured disk model ${\mathcal C}:={\mathbb H}^2 / \{z \mapsto z+1\}$. More specifically, we can choose  around each cusp $c_i$ a horocyclic neighborhood of size $\ell$ denoted $B_i:=B(c_i,\ell)$ and composed of all horocycles around $c_i$ of length at most $\ell$. Thus $B_i$ is isometric to the subset $\{z \mid \Im(z)\geq {1 \over \ell}\}$ of ${\mathcal C}$ where $\Im(z)$ denotes the imaginary part of $z$. By choosing $\ell$ small enough, we can assume that these neighborhoods are pairwise disjoint. 

\begin{theorem}[Brooks]\label{th:brooks}
For any $\varepsilon>0$, there exist numbers $r$ and $\ell$ such that, if the disks $D_1,\ldots,D_n$ of radius $r$ and the horocyclic neighborhoods $B_1,\ldots,B_n$ of size $\ell$ are respectively pairwise disjoint, then to every closed geodesic $\gamma$ in $M$ corresponds a closed geodesic $\gamma'$ in $M'$ whose image under the natural map $M' \to M$ is homotopic to $\gamma$ and such that
$$
\length(\gamma')\leq(1+\varepsilon) \cdot\\length(\gamma).
$$
In particular,
$$
\sys(M')\leq(1+\varepsilon) \cdot\sys(M).
$$
\end{theorem}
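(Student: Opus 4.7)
The plan is to compare the two hyperbolic densities on the common conformal domain $M\setminus\{p_1,\ldots,p_n\}$ and to transport each closed geodesic of $M$ into $M'$ with controlled length distortion. Write $ds_{M'}=\lambda\cdot ds_{M}$ on $M'$; by the Schwarz-type inequality of \cite{Wo82} one has $\lambda\geq 1$ pointwise. I would proceed in three steps.

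\emph{Step 1: Conformal factor control.} The quantitative estimate to establish is: for every $\varepsilon>0$ one can choose $r,\ell>0$ (with the prescribed pairwise disjointness) so that
\begin{equation*}
\lambda\leq 1+\varepsilon \quad \text{on}\ R_{r,\ell}:=\bigl(M\setminus\textstyle\bigcup_i D_i(r)\bigr)\cap\bigl(M'\setminus\bigcup_i B_i(\ell)\bigr).
\end{equation*}
Two ingredients enter: (a) the Schwarz bound $ds_{M'}\leq ds_{D_i^{*}}$ coming from the holomorphic inclusion of each punctured disk $D_i^{*}\hookrightarrow M'$, which, together with the explicit asymptotic form $\frac{|dw|}{|w|\log(r_0/|w|)}$ of the Poincar\'e metric on a punctured disk, controls $\lambda$ quantitatively on $\partial B_i(\ell)$; (b) the Liouville equation $\Delta_{M}\log\lambda=\lambda^{2}-1$ satisfied by any conformal factor between two metrics of curvature $-1$. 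Since $\lambda\geq 1$ the right-hand side is nonnegative, so $\log\lambda$ is subharmonic in the $M$-metric on $R_{r,\ell}$; the maximum principle then propagates the boundary control along $\partial B_i(\ell)$ to all of $R_{r,\ell}$.

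\emph{Step 2: Modifying the closed geodesic.} Let $\gamma$ be a closed geodesic of $M$. After a generic perturbation, $\gamma$ avoids the $p_i$'s. Since each disk $D_i(r)$ is geodesically convex in $M$, the intersection $\gamma\cap D_i(r)$ is a finite union of geodesic arcs, and a transverse-intersection count bounds their total number by $O(\length_M(\gamma)/r)$. Each arc may be replaced, relative to its endpoints on $\partial D_i(r)$, by a boundary arc of length at most $2\pi\sinh r$; because $\gamma$ avoids $p_i$, the replacement can be performed inside $D_i(r)\setminus\{p_i\}$ so that the resulting curve $\tilde\gamma$ lies in $M\setminus\bigcup_i D_i(r)$ and is freely homotopic to $\gamma$ in $M\setminus\{p_1,\ldots,p_n\}$. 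For $r$ small enough,
\begin{equation*}
\length_M(\tilde\gamma)\leq(1+\varepsilon)\length_M(\gamma).
\end{equation*}

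\emph{Step 3: Length comparison and geodesic representative.} Since $\tilde\gamma\subset R_{r,\ell}$, combining the two previous steps yields
\begin{equation*}
\length_{M'}(\tilde\gamma)=\int_{\tilde\gamma}\lambda\,ds_M\leq(1+\varepsilon)\length_M(\tilde\gamma)\leq(1+\varepsilon)^{2}\length_M(\gamma).
\end{equation*}
The free homotopy class of $\tilde\gamma$ in $\pi_1(M')$ projects under the natural map to that of $\gamma$, and is non-parabolic since $\gamma$ is non-trivial in $M$; hence it contains a unique closed geodesic $\gamma'$ of $M'$, with $\length(\gamma')\leq\length_{M'}(\tilde\gamma)$. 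Reabsorbing $(1+\varepsilon)^{2}$ into a slightly smaller $\varepsilon$ gives $\length(\gamma')\leq(1+\varepsilon)\length(\gamma)$. Applied to a systolic geodesic of $M$, this yields $\sys(M')\leq(1+\varepsilon)\sys(M)$.

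\emph{Main obstacle.} The analytic content of the theorem is concentrated in Step~1. The function $\lambda$ is in fact substantially larger than $1$ throughout a full neighborhood of each puncture, so one must identify precisely the region $R_{r,\ell}$ in which $\lambda$ is close to $1$ and justify the bound via the PDE/maximum-principle argument combined with the precise asymptotics of the cusp metric; this is the delicate step following Brooks' original approach. Steps~2 and 3 are geometric and topological and proceed by standard methods once Step~1 is in hand.
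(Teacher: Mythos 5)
The paper does not actually prove this statement: it is recalled verbatim from Brooks \cite{Br99} (``We first recall a comparison theorem of Brooks''), so the only meaningful comparison is with Brooks' original argument. Your architecture --- Wolpert/Ahlfors--Schwarz for $\lambda\geq 1$, explicit punctured-disk asymptotics plus the Liouville equation $\Delta_M\log\lambda=\lambda^2-1$ and the maximum principle for the upper bound, then a geodesic-modification step --- is genuinely in the spirit of that proof. But there is a real gap, located exactly where you place the ``main obstacle'', and it comes from reading the quantifiers backwards: this is a \emph{large} $r$, \emph{large} $\ell$ theorem. The hypothesis that the $D_i(r)$ and the $B_i(\ell)$ are pairwise disjoint is only a nontrivial hypothesis when $r$ and $\ell$ are large (Brooks' ``large cusps'' condition; this is also how the theorem is exploited in Proposition~\ref{prop:log}, where $r\sim\frac{U}{4}\ln g$). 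If $r$ and $\ell$ could be taken small, the hypothesis would be vacuous and the theorem would be a universal statement about arbitrary puncture configurations, which is not what Brooks proves. Your Step~2 explicitly says ``for $r$ small enough'', and Step~1 tacitly lives in the same regime.

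Concretely, two estimates fail as written. In Step~1, the punctured-disk bound does \emph{not} control $\lambda$ on $\partial B_i(\ell)$ for small $\ell$: on the horocycle of length $\ell$ one has $|w|\sim r_0e^{-2\pi/\ell}$, so the bound $\lambda\leq\frac{1}{\mu(w)\,|w|\log(r_0/|w|)}$ is of order $e^{2\pi/\ell}$, and indeed $\lambda$ genuinely blows up deep in the cusp. The ratio $ds_{D_i^*}/ds_M$ is within $1+\varepsilon$ of $1$ only on circles $\partial D_i(\rho)$ with $\rho=\rho(\varepsilon)$ large and $r$ much larger still; the maximum principle should then be run on $M\setminus\bigcup_i D_i(\rho)$ (note also that on your region $R_{r,\ell}$ you never control $\lambda$ on the $\partial D_i(r)$ portion of the boundary). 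In Step~2, replacing a chord of $\gamma\cap D_i(r)$ by a boundary arc costs up to the perimeter $2\pi\sinh r$ additively while the chord removed may be arbitrarily short, and your count of $O(\length(\gamma)/r)$ chords (itself unjustified, since chords can be short) times $2\pi\sinh r$ is never $\leq\varepsilon\,\length(\gamma)$ for any $r$. The repair again requires $r$ large: any chord of $\gamma\cap D_i(r)$ that penetrates to depth $\rho$ has length at least $2(r-\rho)$, so at most $\length(\gamma)/(2(r-\rho))$ chords meet $D_i(\rho)$, and the total detour cost $\length(\gamma)\,\pi\sinh(\rho)/(2(r-\rho))$ is $\leq\varepsilon\,\length(\gamma)$ once $r\geq\rho+\pi\sinh(\rho)/(2\varepsilon)$. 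With the quantifiers inverted in this way your outline can be completed; as it stands, both halves of the estimate break down.
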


As the surfaces $M_p=\Hyp^2/\Gamma_p$ for $p$ prime corresponding to the $p$-th congruence subgroup satisfy
$$
\sys(M_p)\gtrsim {4 \over 3} \ln g_p,
$$
we directly deduce that 
$$
\sys(g_p,0) \gtrsim {4 \over 3} \ln g_p.
$$ 
 \\

We proceed to a more precise lower bound on the systole of $M'$ in terms of the systole of $M$ and the radius $r$.  

\begin{proposition}\label{prop:sysestimate}
For any $r$ such that the disks $D_1,\ldots,D_n$ of radius $r$ are pairwise disjoint, 
$$
\sys(M')> \min \{\sys(M), 4r\}.
$$
\end{proposition}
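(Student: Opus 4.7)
My plan is to bound from below the length of an arbitrary closed geodesic $\alpha' \subset M'$. Let $\alpha := f(\alpha') \subset M$ be its image under the natural map $f\colon M'\to M$; the Schwarz--Ahlfors--Wolpert lemma is strictly contracting pointwise (since $M$ is closed while $M'$ carries cusps), so $\length_M(\alpha)<\length_{M'}(\alpha')$. I would then split cases according to whether $\alpha$ is null-homotopic in $M$.

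If $\alpha$ is non-trivial in $\pi_1(M)$, its free homotopy class in $M$ contains a closed geodesic of length at most $\length_M(\alpha)$, hence $\sys(M)\leq \length_M(\alpha)<\length_{M'}(\alpha')$ and we are done. Otherwise $\alpha$ is null-homotopic in $M$, and it suffices to prove $\length_M(\alpha)\geq 4r$; strict contraction then promotes this to $\length_{M'}(\alpha')>4r$, as desired.

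For this latter bound, I would lift $\alpha$ to a closed loop $\widetilde{\alpha}\subset \Hyp^2$ and write $\widetilde{P}$ for the preimage of $\{p_1,\ldots,p_n\}$ under the universal covering $\Hyp^2\to M$. Since the disks $D_i$ are pairwise disjoint and embedded, any two distinct points of $\widetilde{P}$ are at distance at least $2r$. Assume for contradiction $\length(\widetilde{\alpha})<4r$. Using the elementary estimate $\length(\widetilde{\alpha})\geq 2\diam(\widetilde{\alpha})$ for closed curves, together with the identity $\diam(\mathrm{conv}(\widetilde{\alpha}))=\diam(\widetilde{\alpha})$ in $\Hyp^2$, the closed convex hull $K:=\mathrm{conv}(\widetilde{\alpha})$ has diameter strictly less than $2r$ and therefore meets $\widetilde{P}$ in at most one point. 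If $K\cap\widetilde{P}=\emptyset$, then $\widetilde{\alpha}$ is null-homotopic in the simply connected set $K\subset\Hyp^2\setminus\widetilde{P}$, making $\alpha'$ trivial in $\pi_1(M')$. If instead $K\cap\widetilde{P}=\{\widetilde{p}\}$, then $\pi_1(K\setminus\{\widetilde{p}\})\cong\Z$ injects into $\pi_1(\Hyp^2\setminus\widetilde{P})$ as the subgroup generated by a small loop encircling $\widetilde{p}$, so the class of $\widetilde{\alpha}$ projects to a power of the peripheral generator $c_{\pi(\widetilde{p})}$ in $\pi_1(M')$, making $[\alpha']$ parabolic. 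Either outcome contradicts the fact that $\alpha'$ is a closed geodesic, which must represent a non-trivial hyperbolic conjugacy class.

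The main obstacle lies in the last topological step: verifying that a loop enclosed in a convex set meeting $\widetilde{P}$ in at most one point can only represent the trivial class or a power of a single peripheral loop. This reduces to standard covering-space and free-group bookkeeping, once the two metric facts $\length\geq 2\diam$ and $\diam(\mathrm{conv})=\diam$ (which hold in $\Hyp^2$, the latter via a Carath\'eodory-type argument) are in hand.
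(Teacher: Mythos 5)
Your proof is correct, and for the key step it takes a genuinely different route from the paper. The easy half (a closed geodesic of $M'$ whose image is essential in $M$ has length strictly above $\sys(M)$, by the strict contraction of the natural map $M'\to M$) is the same in both. For the null-homotopic case the paper argues downstairs: the corresponding free homotopy class in $M\setminus\{p_1,\ldots,p_n\}$ is represented by a length-minimizing piecewise geodesic of $M$ whose singular points lie among the $p_i$; a non-constant class that is trivial in $M$ and non-peripheral forces at least two such singular points, and consecutive ones are at distance at least $2r$, giving length at least $4r$. You instead lift to the universal cover and run a convexity argument: a closed lift of length $<4r$ has diameter $<2r$, hence so does its convex hull (balls in $\Hyp^2$ are convex, so the hull sits inside the intersection of balls of radius $\diam$ centered on the curve), hence the hull meets the preimage $\widetilde{P}$ of the punctures in at most one point, forcing the class to be trivial or a power of a single peripheral loop --- neither of which is represented by a closed geodesic of $M'$. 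Both arguments are sound and exploit the same $2r$-separation of the punctures; yours buys a cleaner justification of the step the paper leaves implicit (why the minimal representative has at least two corners), at the cost of the two metric facts $\length\geq 2\diam$ and $\diam(\mathrm{conv}(A))=\diam(A)$, both of which do hold in $\Hyp^2$ as you indicate. One small point of hygiene: if the single point $\widetilde{p}$ of $\widetilde{P}$ in the hull $K$ lies on the boundary of $K$, or if $K$ is degenerate, then $\pi_1(K\setminus\{\widetilde{p}\})$ is trivial rather than $\Z$, but that only pushes you into the first (null-homotopic) alternative, so the contradiction stands.
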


\begin{proof}
Any closed geodesic of $M'$ whose image under the map $M' \to M$ is not homotopically trivial has length strictly bounded from below by $\sys(M)$. We will prove that the remaining homotopy classes have geodesic representatives of length greater than $4r$. Among such classes, a class whose geodesic representative is of minimal length is necessarily simple. Consider such a class in $M'$. This class defines a unique homotopy class in $M\setminus\{p_1,\ldots,p_n\}$ which is uniquely represented by a minimal piecewise geodesic in $M$ where the singular points of the curve belong to $\{p_1,\ldots,p_n\}$. The number of these singular points is at least two and so, as two points of the $p_i$'s are distance at least $2r$, its length is at least $4r$. To conclude the proof, we again use that the natural map $M' \to M$ is contracting \cite{Wo82}.
\end{proof}

Property 2. is now implied by the following.

\begin{proposition}\label{prop:log}
For any $n \in \N^\ast$, 
$$
\sys(g,n)\geq \min \left\{U \ln g, 2\arccosh \left({2(g-1) \over n}+1\right)\right\}
$$  
where $U$ denotes the Buser's and Sarnak's universal constant. 
\end{proposition}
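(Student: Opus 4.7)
The plan is to combine the Buser--Sarnak existence result for closed surfaces with a packing argument and Proposition \ref{prop:sysestimate}. First, invoke Buser and Sarnak's theorem to fix a closed hyperbolic surface $M$ of genus $g$ with $\sys(M)\geq U\ln g$. Its area equals $4\pi(g-1)$ by Gauss--Bonnet. Our goal is to puncture $M$ at $n$ points that are sufficiently spread out, so that Proposition \ref{prop:sysestimate} kicks in with a controlled value of the radius $r$.

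For this, set
$$
r := \tfrac{1}{2}\arccosh\!\left(\tfrac{2(g-1)}{n}+1\right),
$$
and carry out a greedy packing: choose a maximal collection $\{p_1,\ldots,p_N\}\subset M$ whose pairwise distances are $\geq 2r$. By maximality the closed balls $B(p_i,2r)$ cover $M$, and each has area at most that of a hyperbolic disk of radius $2r$, namely $2\pi(\cosh(2r)-1)$. Summing areas and using our choice of $r$,
$$
N\cdot 2\pi(\cosh(2r)-1) \geq 4\pi(g-1), \qquad \text{so} \qquad N\geq \tfrac{2(g-1)}{\cosh(2r)-1}=n.
$$
Keep $n$ of these points; the condition $d(p_i,p_j)\geq 2r$ is exactly what is needed for the open disks of radius $r$ centered at the $p_i$'s to be pairwise disjoint (the midpoint of a length-$2r$ geodesic between two centers lies on the boundary of both open disks and thus in neither).

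Now let $M'$ be the unique complete hyperbolic surface of signature $(g,n)$ conformally equivalent to $M\setminus\{p_1,\ldots,p_n\}$, which exists by uniformization. Since the $r$-disks around the $p_i$'s are pairwise disjoint, Proposition \ref{prop:sysestimate} yields
$$
\sys(M') > \min\{\sys(M),\,4r\} \geq \min\!\left\{U\ln g,\ 2\arccosh\!\left(\tfrac{2(g-1)}{n}+1\right)\right\},
$$
and this lower bound on $\sys(M')$ passes to $\sys(g,n)$ since $\sys(g,n)\geq \sys(M')$.

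The argument is essentially routine given what has been set up; the only places requiring a little care are the area comparison (the ball $B(p_i,2r)\subset M$ has area at most that of a lifted disk in $\mathbb{H}^2$, since the universal covering projection is a local isometry, hence area-non-increasing on subsets), and the observation that pairwise distance exactly $2r$ still leaves the open disks of radius $r$ disjoint so that Proposition \ref{prop:sysestimate} applies at the extremal value of $r$. No genuine obstacle should arise.
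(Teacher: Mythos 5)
Your proof is correct and follows essentially the same route as the paper: start from a Buser--Sarnak maximal closed surface, produce $n$ well-separated points via a maximal packing and an area--covering count, and conclude with Proposition \ref{prop:sysestimate}. The only (harmless) difference is that you take $r=\tfrac12\arccosh\bigl(\tfrac{2(g-1)}{n}+1\bigr)$ outright rather than the paper's $r=\min\bigl\{\tfrac{U}{4}\ln g,\tfrac12\arccosh\bigl(\tfrac{2(g-1)}{n}+1\bigr)\bigr\}$; since $\min\{\sys(M),4r\}$ still dominates the claimed bound, both choices work.
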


\begin{proof}
We know by \cite{BS94} that 
$$
\sys(g,0)\geq U \, \ln g
$$
for any $g\geq 2$ where $U>0$ is universal constant. Let $M$ be a maximal surface of genus $g$ and consider a maximal system $\{D_i\}_{i=1}^N$ of pairwise disjoint disks of radius 
$$
r:=\min \left\{{U\over 4} \ln g,{1\over 2}\arccosh \left({2(g-1) \over n}+1\right)\right\}.
$$ 
Observe that $r\leq \sfrac{\sys(M)}{4}$ and denote by $p_i$ the center of the disk $D_i$. By the maximality of such a system, $M$ must be covered by the disks $\{2D_i\}_{i=1}^N$, where $2D_i$ denotes the disk of center $p_i$ and radius $2r$. We derive that
$$
N \cdot 2\pi (\cosh 2r-1)\geq N \cdot A(2D_i) \geq A(M)=4\pi (g-1),
$$
so
$$
N \geq n.
$$
Now let $M'$ be the unique surface with $n$ cusps such that $M \setminus \{p_1,\ldots,p_n\}$ and $M'$ are conformally equivalent. We conclude by applying Proposition \ref{prop:sysestimate}.
\end{proof}

\begin{remark}\label{rem:inf}
In particular for any $\alpha \in [0,1[$, 
$$
\sys(g,[g^\alpha])\geq c_\alpha \, \ln g,
$$  
where $c_\alpha:=\min \{U,2(1-\alpha)\}$. 
\end{remark}

\subsection{Final remarks}

Fix a growing function $n:\N \to \N$ and let $M'$ be a maximal surface of genus $g$ with $n(g)$ cusps, that is
$$
\sys(M')=\sys(g,n(g)).
$$
Theorem \ref{th:brooks} applied to large enough $g$ implies the following:
$$
\sys(g,0)\gtrsim \sys(g,n(g)).
$$
It would be interesting to know for what $n(g)$ the opposite inequality holds. In particular, observe that proving that $\sys(g,n(g))\simeq \sys(g,0)$ for functions satisfying $n(g)\leq 5[g^{\sfrac{2}{3}}]$ would imply the conjecture \cite{Sc98} that 
$$
\sys(g_p,0)\simeq {4\over3}\ln g_p.
$$







\end{document}